\theoremstyle{plain}
\newtheorem{theorem}{Theorem}
\newtheorem*{hypothesis}{Conjecture}
\newtheorem{lemma}{Lemma}
\newtheorem{propos}{Proposition}
\theoremstyle{definition}
\newtheorem{definition}{Definition}
\newtheorem{example}{Example}
\newcommand\blfootnote[1]{%
  \begingroup
  \renewcommand\thefootnote{}\footnote{#1}%
  \addtocounter{footnote}{-1}%
  \endgroup
}
\begin{document}
\newpage
\setcounter{page}{1}
	\title{On Christophersen's problem}
\author{Roman  Stasenko}
\address{HSE University, Faculty of Computer Science, Pokrovsky Boulevard 11, Moscow,
109028 Russia}
\email{theromestasenko@yandex.ru}
\maketitle

\begin{abstract}
Let  $A$ be a finite-dimensional (Artinian) Gorenstein algebra, and let $\operatorname{Aut}(A)^{\circ}$ denote the connected component of the identity in the automorphism group of  $A$.  We introduce a new subclass of Gorenstein algebras and prove that for any algebra  $A$ in this subclass, the group $\operatorname{Aut}(A)^{\circ}$ is solvable. This result is closely related to the Christophersen problem in the theory of local algebras.
\end{abstract}

\blfootnote{The article is prepared within the framework of the project ``International Academic Cooperation'' HSE University”

2020 Mathematics Subject Classification. Primary 13H10, 14M05; Secondary 22E45, 20G05.

{\it Key words and phrases.} {Local algebra, derivation, Lie algebra, Gorenstein algebra, socle.}}
\section{Introduction}\label{intro}
Let $\mathbb{C}$ be our base field. A central object in the study of finite-dimensional commutative algebras is a local algebra. By definition, a unital, commutative, associative $\mathbb{C}$-algebra  $A$ is {\it local} if it contains precisely one maximal ideal.

Local algebras are some kind of "briks" for every finite dimensional commutative associative algebra. Namely, every finite dimensional commutative associative algebra $A$ is a direct sum of a finite number of local ideals $A_i\subseteq A$, i.e. the decomposition
	$$A = A_1\oplus A_2\oplus ...\oplus A_l$$
	comes true for some $l\in \mathbb{Z}_{>0}$. The proof of this fact is presented in \cite[Section~1,~Lemma 1.3]{iarz}.

One can also easily prove that $A$ is local if and only if it has the form $A = \mathbb{C}\oplus\mathfrak{m}$ \cite[Section 1,~lemma 1.2]{iarz}, where the first summand is one-dimensional vector subspace of $A$ spanned by the unit and the second summand is the nilpotent ideal of $A$, which is  the maximal ideal of $A$ . 

It is well known, that every finite dimensional local algebra $A$ over $\mathbb{C}$ is isomorphic to the factor algebra  $\mathbb{C}[x_1,...,x_m]/I$, where $I$ is ideal in algebra of polynomials over $\mathbb{C}$, which is generated by  polynomials $f_1,...,f_k$, such that $f_i(0,...,0) = \frac{\partial f_i}{\partial x_j}(0,...,0) = 0,\quad\forall i=\overline{1, k}, j=\overline{1,m}$ . In addition we may suppose that among the generators $f_1,...,f_k$ there are monomials $x_i^{k_i}$ for every $i=\overline{1,m}$ and some $k_i\in\mathbb{Z}_{> 0}$.

 	The classification of all local algebras of fixed dimension is rather difficult problem. It  is still not completed. There are only finite number of local algebras of dimension less than 7. Starting from the dimension 7 there are infinitely many local algebras. Many results, which are connected with such classification, were obtained by D. Suprunenko and R.Tyshkevich in \cite{suptish}. 
 
 	Consider an arbitrary local algebra  $A = \mathbb{C}\oplus \mathfrak{m}$ of dimension $n$, where $\mathfrak{m}$ is the maximal ideal of $A$. The group $\operatorname{Aut}(A)$ of automorphisms of the algebra $A$ is an algebraic subgroup in the group $GL(A)$ of non-degenerate linear transformations on $A$.  
 	
 	 	 Let us give two canonical examples.
 	 
 	 \begin{example}\label{ex1}
 	 	Let $A = \mathbb{C}[x_1,...,x_{n-1}]/I,\quad  I = (x_i^2, x_ix_j: i,j=\overline{1, n-1}, i\neq j)$. It is easy to see that 
 	 	$$A = \mathbb{C}\oplus \langle \overline{x}_1,..., \overline{x}_{n-1}\rangle,$$
 	 	where $\overline{x}_i = x_i+I$.
 	 	One can compute that in this case $\operatorname{Aut}(A) = GL_{n-1}(\mathbb{C})$. 
 	 	
 	 \end{example}
 \begin{example}\label{ex2}
	Consider the local algebra $A = \mathbb{C}[t]/(t^n)$ for some $n\in\mathbb{Z}_{> 0}.$ It is clear that $$A = \mathbb{C}\oplus\langle \overline{t}, \overline{t}^2,..., \overline{t}^{n-1}\rangle,\quad\overline{t}^i = t^i +(t^n).$$
	Of course, every automorphism $\phi\in\operatorname{Aut}(A)$ is defined uniquely  by the element $\phi(\overline{t})$. So, if we fix the image  $\phi (\overline{t})$ as $\phi(\overline{t}) = a_1\overline{t} + a_2\overline{t}^2 + \dots +a_{n-1}\overline{t}^{n-1}, a_1\neq 0$, then all other values $\phi(a)$, where $a\in A$ can be computed immediately. Consequently, here we have $\dim\operatorname{Aut}(A) = n-1.$
\end{example} 
 	Christophersen's problem can be formulated as the following:
 	\begin{hypothesis}
 			 For every local algebra $A$ it is true that $\dim{\operatorname{Aut}(A)}^{\circ}\geqslant n-1$.  Futhermore, a local algebra $A$ with the property $\dim{\operatorname{Aut}(A)}^{\circ} = n-1$ is isomorphic to $ \mathbb{C}[t]/(t^n)$.
 	\end{hypothesis}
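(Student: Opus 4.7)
The stated problem is Christophersen's conjecture, which, to my knowledge, remains open in full generality, so what follows is a plan of attack rather than a completed proof. The first reduction is standard: because $\operatorname{Aut}(A)$ is an algebraic subgroup of $GL(A)$ in characteristic zero, $\dim\operatorname{Aut}(A)^{\circ}$ equals the dimension of its Lie algebra, and this Lie algebra is canonically identified with $\operatorname{Der}(A)$, the space of $\mathbb{C}$-linear derivations of $A$. The conjecture thus becomes the pair of statements: $\dim\operatorname{Der}(A)\geqslant n-1$, and equality forces $A\cong\mathbb{C}[t]/(t^n)$.

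My plan is induction on $n=\dim A$, organised around the socle $\operatorname{Soc}(A)=\operatorname{Ann}_{A}(\mathfrak{m})$. A preliminary lemma, which is easy, is that every $D\in\operatorname{Der}(A)$ preserves $\operatorname{Soc}(A)$: from $0=D(sm)=D(s)m+sD(m)$ and $sD(m)\in s\mathfrak{m}=0$ one reads off $D(s)\mathfrak{m}=0$. Next I would exhibit a first batch of derivations explicitly. Fixing a nonzero $s\in\operatorname{Soc}(A)$ and a linear functional $\varphi\colon\mathfrak{m}/\mathfrak{m}^2\to\mathbb{C}$, set $D_{\varphi}(\alpha+a_1)=\varphi(\overline{a_1})\,s$ for $\alpha\in\mathbb{C}$, $a_1\in\mathfrak{m}$. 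The Leibniz identity reduces to checking that $\varphi$ vanishes on $\mathfrak{m}^2$, which holds by construction, so one gets an injection of $\operatorname{Hom}(\mathfrak{m}/\mathfrak{m}^2,\mathbb{C})\cdot\operatorname{Soc}(A)$ into $\operatorname{Der}(A)$. These "socle-valued" derivations are the analogue of the vector fields $t^{n-1}\partial_t$ in Example~\ref{ex2}.

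For the inductive step one passes to $A'=A/\mathbb{C}s$, which is again local of dimension $n-1$. Derivations of $A$ that stabilise $\mathbb{C}s$ descend to $\operatorname{Der}(A')$, to which the induction hypothesis applies, while the subspace of socle-valued derivations constructed above supplies the missing direction; the bookkeeping should give $\dim\operatorname{Der}(A)\geqslant\dim\operatorname{Der}(A')+1\geqslant n-1$. For the equality case, tracking where slack is lost in the inductive estimate forces $\dim\mathfrak{m}/\mathfrak{m}^2=1$ (otherwise the socle-valued derivations alone already produce more than one new direction), hence $\mathfrak{m}=(t)$ is principal and $A\cong\mathbb{C}[t]/(t^n)$.

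The main obstacle, and the reason the conjecture has resisted a proof of this shape, is precisely the lifting step: a derivation of $A'$ need not lift to a derivation of $A$, and the obstruction is a class in the Harrison/Hochschild cohomology $H^{2}(A',\mathbb{C}s)$ of the square-zero extension $A\to A'$. Worse, when $\dim\operatorname{Soc}(A)>1$ there is no canonical choice of $s$, and the naive induction may be wasteful. It is no accident that the present paper sidesteps this by restricting to a subclass of Gorenstein algebras (where $\dim\operatorname{Soc}(A)=1$ gives a canonical quotient) and replaces the dimension bound by the structural statement that $\operatorname{Aut}(A)^{\circ}$ is solvable, which is a much more tractable consequence of the same derivation analysis.
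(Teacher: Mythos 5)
This statement is Christophersen's conjecture, which the paper records precisely as a \emph{conjecture} and does not prove; the paper's actual theorem is the different (and much weaker) assertion that $\operatorname{Der}(A)$ is solvable for the restricted subclass of Gorenstein algebras of full null-index. So there is no proof in the paper to compare yours against, and you are right to offer a plan of attack rather than claim a proof. Your preliminary reductions are correct: the identification $\dim\operatorname{Aut}(A)^{\circ}=\dim\operatorname{Der}(A)$ is exactly how the paper recasts the problem, derivations do preserve the socle (the paper makes the same computation when it observes that $\operatorname{Soc}(A)$ is a $\mathfrak{g}$-module), and your family $D_{\varphi}$ of socle-valued derivations is genuine and recovers Perepechko's bound $\dim\operatorname{Der}(A)\geqslant\dim(\mathfrak{m}/\mathfrak{m}^{2})\cdot\dim\operatorname{Soc}(A)$, which the paper quotes.

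The genuine gap is the one you name yourself, and it is fatal to the argument as written: the inductive step needs every derivation of $A'=A/\mathbb{C}s$ to lift to a derivation of $A$ stabilising $\mathbb{C}s$, and nothing in the setup guarantees this; the obstruction is a class in the second Harrison cohomology of $A'$ with values in $\mathbb{C}s$ and there is no reason for it to vanish. Without surjectivity of the restriction map $\operatorname{Der}(A)_{\mathbb{C}s}\to\operatorname{Der}(A')$ the inequality $\dim\operatorname{Der}(A)\geqslant\dim\operatorname{Der}(A')+1$ does not follow, and both the dimension bound and the equality analysis collapse with it. A further delicacy you also flag: when $\dim\operatorname{Soc}(A)>1$ the choice of $s$ is not canonical, so the kernel and image of the restriction map vary with $s$ and the bookkeeping cannot be done uniformly. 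In short, your reductions are sound and your self-diagnosis is accurate, but the proposal is not a proof --- consistent with the fact that the statement remains open and the paper only establishes a solvability result for a special subclass.
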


 	Of course, this conjecture can be reformulated in terms of the Lie algebra $\operatorname{Lie}(\operatorname{Aut}(A)) = \operatorname{Der}(A)$. Here we have
 	$$\operatorname{Der}(A) = \left\{\xi\in \mathfrak{gl}(A), \xi(ab) = \xi(a)b+a\xi(b),\quad\forall a,b\in A\right\}.$$

Consider an arbitrary $\xi\in\operatorname{Der}(A)$ and $0\neq a\in\mathfrak{m}$. Assume that $\xi(a) = s\cdot 1 + m$, where $s\in\mathbb{C}^* = \mathbb{C}\backslash\{0\}$, $1$ is the unit of  $A$ and $m\in \mathfrak{m}$ (so  $\xi(a)\notin \mathfrak{m}$).  Then there exists such $b\in A$, that $b\xi(a) = 1$. The ideal $\mathfrak{m}$ is nilpotent, so there is such $k\in\mathbb{Z}_{>0}$ that $a^k\neq 0$ and $a^{k+1} = 0$. Thus we have
$$(k+1)a^k = (k+1)a^kb\xi(a) = b\xi(a^{k+1}) = 0,$$
so we can derive that $a^k = 0$ and obtain a contradiction. Consequently, the maximal ideal $\mathfrak{m}$ is invariant under the action of every derivation $\xi\in\operatorname{Der}(A)$. We can canonically identify every element of $\operatorname{Der}(A)$ with its restriction to the maximal ideal $\mathfrak{m}$ and think that $\operatorname{Der}(A)\subset\mathfrak{gl}(\mathfrak{m})$.

 Consequently, we  have that
$$\dim\operatorname{Der}(A)\leqslant (n-1)^2$$
for an arbitrary local algebra $A$. From Example \ref{ex1} we can obtain that this bound is reached.

 	 	\begin{definition} (define grading)
 		A commutative algebra $A$ is called {\it non-negatively graded},  if there is a decomposiiton
 		$$A = A_0\oplus \overset{k}{\underset{i=1}{\bigoplus}}A_i,$$
 		where $A_iA_j\subset A_{i+j}$ for arbitrary $i,j=\overline{1,k}$ and $A_0 = \mathbb{C}\cdot 1$.
 	 	\end{definition}
 	
 	  Consider the set $$\operatorname{Soc}(A)= \{a\in\mathfrak{m}\colon am=0,\quad \forall m\in\mathfrak{m} \} .$$ Clearly, the set  $\operatorname{Soc}(A)$ is a nilpotent ideal in $A$, which is called  {\it the socle of $A$}. 
 	 	
 	In 1996 S.Yau in \cite{Yau} gave a lower bound for dimension of  $\operatorname{Der}(A)$, where $A$ is non-negatively graded local algebra.

 	\begin{propos}[Yau] 
 		For non-negatively graded local algebra $A$ it follows that $\dim\operatorname{Der}(A)\geqslant n-\dim \operatorname{Soc}(A). $
 	\end{propos}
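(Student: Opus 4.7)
The plan is to exhibit an explicit $A$-submodule of $\operatorname{Der}(A)$ of dimension exactly $n-\dim\operatorname{Soc}(A)$, namely the submodule generated by the Euler derivation. First, since $A$ is non-negatively graded with $A_0=\mathbb{C}\cdot 1$, the \emph{Euler derivation} $E$ defined on homogeneous elements by $E(a)=i\,a$ for $a\in A_i$ is an honest derivation: for $a\in A_i$ and $b\in A_j$, Leibniz reads $E(ab)=(i+j)ab=i\,ab+j\,ab=E(a)b+aE(b)$, and this extends by linearity to all of $A$.

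Second, because $A$ is commutative, $\operatorname{Der}(A)$ carries a natural $A$-module structure: for any $c\in A$ and $\xi\in\operatorname{Der}(A)$, the rule $(c\xi)(a):=c\cdot\xi(a)$ defines again a derivation, since $(c\xi)(ab)=c\xi(a)b+ca\xi(b)=(c\xi)(a)b+a(c\xi)(b)$. Thus the $A$-linear map $\varphi\colon A\to\operatorname{Der}(A)$, $c\mapsto cE$, is well defined, and $\dim(A\cdot E)=n-\dim\ker\varphi$.

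The heart of the argument is then the identification $\ker\varphi=\operatorname{Soc}(A)$. The condition $cE=0$ means $c\cdot E(a)=0$ for every $a\in A$; decomposing $a=\sum a_i$ into homogeneous components and using $E(a)=\sum i\,a_i$, this is equivalent to $c\cdot a_i=0$ for all $i\geq 1$ and all $a_i\in A_i$, i.e.\ $c\,\mathfrak{m}=0$. Writing $c=c_0+c_1+\cdots$ with $c_i\in A_i$ and applying this to a nonzero homogeneous $a_j\in A_j$ with $j\geq 1$, the distinct graded components of $ca_j$ must each vanish; in particular $c_0\cdot\mathfrak{m}=0$, which forces $c_0=0$ in the nontrivial case $\mathfrak{m}\neq 0$. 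Hence $c\in\mathfrak{m}$ with $c\mathfrak{m}=0$, i.e.\ $c\in\operatorname{Soc}(A)$; the reverse inclusion is immediate. Putting everything together,
$$\dim\operatorname{Der}(A)\ \geq\ \dim(A\cdot E)\ =\ n-\dim\operatorname{Soc}(A).$$

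The main obstacle is conceptual rather than technical: one must recognize that $\operatorname{Der}(A)$ is naturally an $A$-module (a feature special to commutative algebras) and combine this with the Euler derivation afforded by the grading, rather than attempting to construct many independent derivations by hand. Once this viewpoint is in place, the bound reduces to computing the annihilator of $E$, and the answer is forced to be the socle.
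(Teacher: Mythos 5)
Your proof is correct. Note that the paper itself gives no proof of this proposition --- it is quoted from the reference [Xu--Yau] --- so there is nothing internal to compare against; your argument (the Euler derivation $E$, the $A$-module structure on $\operatorname{Der}(A)$, and the identification of the annihilator of $E$ with $\operatorname{Soc}(A)$ via the surjectivity of $E$ onto $\mathfrak{m}$) is exactly the classical route, and every step checks out. The only caveat, which you already flag, is the degenerate case $\mathfrak{m}=0$: there $A=\mathbb{C}$, $\operatorname{Der}(A)=0$, and with the paper's convention $\operatorname{Soc}(A)\subset\mathfrak{m}$ the stated inequality itself fails, so excluding that case is forced by the statement and not a gap in your argument.
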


 	Further in 2014  in \cite{Perep} A. Perepechko gave another bound, which holds for every local algebra, not only for non-negatively graded.
 	
 	\begin{propos}[Perepechko]
 		For every local algebra $A$ the following inequality holds
 		$$\dim\operatorname{Der}(A)\geqslant \dim (\mathfrak{m}/\mathfrak{m}^2)\cdot \dim\operatorname{Soc}(A).$$
 	\end{propos}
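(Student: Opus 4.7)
My plan is to exhibit $\dim(\mathfrak{m}/\mathfrak{m}^2)\cdot\dim\operatorname{Soc}(A)$ linearly independent derivations of $A$ explicitly, by pairing each element of a minimal generating system of $\mathfrak{m}$ with each element of the socle.

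First, set $m=\dim(\mathfrak{m}/\mathfrak{m}^2)$ and pick $\overline{x}_1,\dots,\overline{x}_m\in\mathfrak{m}$ whose images in $\mathfrak{m}/\mathfrak{m}^2$ form a basis. By Nakayama these generate $\mathfrak{m}$, so there is a surjection $\pi\colon R=\mathbb{C}[x_1,\dots,x_m]\twoheadrightarrow A$ sending $x_i$ to $\overline{x}_i$. Because the generators are minimal in number, the kernel $I$ lies inside $(x_1,\dots,x_m)^2$; equivalently, every $f\in I$ has no constant and no linear part (if a linear combination $\sum c_i x_i$ lay in $I+\mathfrak{m}_R^2$, its image $\sum c_i\overline{x}_i$ would be zero in $\mathfrak{m}/\mathfrak{m}^2$, forcing $c_i=0$).

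Next, for each $i\in\{1,\dots,m\}$ and each $s\in\operatorname{Soc}(A)$ I construct a derivation $\xi_{i,s}\in\operatorname{Der}(A)$ characterized by $\xi_{i,s}(\overline{x}_j)=\delta_{ij}\,s$. Choose any lift $\widetilde{s}\in R$ of $s$ and form the $R$-derivation $D_{i,\widetilde{s}}=\widetilde{s}\cdot\partial/\partial x_i$ on $R$. Composing with $\pi$ gives a $\mathbb{C}$-linear derivation $R\to A$; for it to factor through $A=R/I$ it suffices that $\pi(D_{i,\widetilde{s}}(f))=0$ for every $f\in I$. Since $f$ has no linear part, the partial $\partial f/\partial x_i$ lies in $(x_1,\dots,x_m)$, so $\pi(\partial f/\partial x_i)\in\mathfrak{m}$. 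Multiplying by $s\in\operatorname{Soc}(A)$ annihilates it, which is exactly the required identity. Independence of the choice of lift $\widetilde{s}$ is automatic because two lifts differ by an element of $I$, which $\pi$ kills.

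Finally, I verify linear independence. Fix a basis $s_1,\dots,s_d$ of $\operatorname{Soc}(A)$; this yields $md$ derivations $\xi_{i,s_\alpha}$. Any relation $\sum_{i,\alpha}c_{i,\alpha}\,\xi_{i,s_\alpha}=0$, evaluated on $\overline{x}_j$, collapses to $\sum_\alpha c_{j,\alpha}\,s_\alpha=0$ for each fixed $j$, which forces all $c_{j,\alpha}=0$. Hence $\dim\operatorname{Der}(A)\geqslant md=\dim(\mathfrak{m}/\mathfrak{m}^2)\cdot\dim\operatorname{Soc}(A)$. The single nontrivial step is the well-definedness of $\xi_{i,s}$, and it rests on a neat interplay between the two hypotheses that appear in the bound: minimality of the generators places $I$ inside $(x_1,\dots,x_m)^2$ (so partials of relations land in $\mathfrak{m}$), and the defining property of $\operatorname{Soc}(A)$ then kills those partials after multiplication by $s$.
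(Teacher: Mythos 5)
Your proof is correct. Note that the paper itself gives no argument for this proposition --- it is quoted from Perepechko's article \cite{Perep} --- so there is no internal proof to compare against; your construction is the standard one and is essentially the original argument. All the key points are in place: minimality of the generating set forces $I\subseteq(x_1,\dots,x_m)^2$, hence $\partial f/\partial x_i\in(x_1,\dots,x_m)$ for $f\in I$, and multiplication by a socle element kills the image of that partial in $\mathfrak{m}$, which is exactly what makes $\widetilde{s}\,\partial/\partial x_i$ descend to $A$; the evaluation on the $\overline{x}_j$ then gives linear independence of the $md$ derivations. One cosmetic remark: the same family can be packaged coordinate-freely as $\operatorname{Hom}(\mathfrak{m}/\mathfrak{m}^2,\operatorname{Soc}(A))\hookrightarrow\operatorname{Der}(A)$, sending a linear map $\phi$ to the derivation $c+m\mapsto\phi(m+\mathfrak{m}^2)$, which vanishes on $\mathbb{C}\cdot 1\oplus\mathfrak{m}^2$; the Leibniz rule holds because $\operatorname{Soc}(A)\cdot\mathfrak{m}=0$, and this avoids choosing a presentation altogether. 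Your version and this one produce the same subspace of $\operatorname{Der}(A)$.
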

 	In \cite{Perep} the author used the result of M. Schulze (see \cite{Shulze}) about local algebras, which was proved earlier in 2009. This result can be formulated as the following criterion of solvability of  $\operatorname{Der}(A)$.
 	\begin{theorem}[Schulze]
 		Let $A = \mathbb{C}[x_1,\dots,x_k]/I$ be finite-dimensional local algebra, where $I\subset \mathfrak{p}^l, \mathfrak{p}=(x_1,...,x_k)$. If the inequality
 		$$\dim (I/\mathfrak{p}I)<k+l-1$$
 		holds, then $\operatorname{Der}(A)$ is solvable.
 	\end{theorem}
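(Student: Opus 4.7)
The approach is to reduce the problem to a representation-theoretic question about stabilizers in $\mathfrak{gl}_k$ via the natural filtration on $\operatorname{Der}(A)$ induced by $\mathfrak{m}$. First I would lift each $D \in \operatorname{Der}(A)$ to a derivation $\widetilde D = \sum_{i=1}^{k} p_i(x)\,\partial_{x_i}$ of $R = \mathbb{C}[x_1,\dots,x_k]$ satisfying $\widetilde D(I)\subset I$; since every derivation preserves $\mathfrak{m}$, I may arrange $p_i\in \mathfrak{p}$. Define $F^{d}\operatorname{Der}(A) = \{[\widetilde D] : p_i \in \mathfrak{p}^{d+1}\text{ for all }i\}$. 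Then $[F^a, F^b]\subset F^{a+b}$, and iterated brackets starting from $F^1$ fall into ever deeper pieces of the filtration, which eventually vanishes because $\mathfrak{p}^N\subset I$ for large $N$. Hence $F^1$ is a nilpotent Lie ideal in $F^0=\operatorname{Der}(A)$, and $\operatorname{Der}(A)$ is solvable if and only if the quotient $\operatorname{Der}(A)/F^1$ is.

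The quotient $\operatorname{Der}(A)/F^1$ embeds into $\mathfrak{gl}_k=\mathfrak{gl}(\mathfrak{m}/\mathfrak{m}^2)$ via the action on $\mathfrak{m}/\mathfrak{m}^2$, and its image coincides with the set of matrices $M\in\mathfrak{gl}_k$ whose associated degree-zero derivation $\widetilde D_M = \sum m_{ij}x_j\partial_{x_i}$ of $R$ preserves the initial ideal $\operatorname{in}(I)$ component by component. In particular this image stabilizes the subspace $V\subset \operatorname{Sym}^l\mathbb{C}^k$ spanned by the degree-$l$ leading forms of a minimal generating set of $I$, which is well-defined because $I\subset \mathfrak{p}^l$. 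Since minimal generators of $I$ correspond bijectively to minimal generators of $\operatorname{in}(I)$ via initial forms, Nakayama's lemma gives $\dim V \leq \dim I/\mathfrak{p}I < k+l-1$.

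The heart of the proof is then the following linear-algebraic claim: any Lie subalgebra $\mathfrak{h}\subset\mathfrak{gl}_k$ stabilizing a non-zero proper subspace $V\subset\operatorname{Sym}^l\mathbb{C}^k$ with $\dim V<k+l-1$ is solvable. I would argue by contradiction: a non-solvable $\mathfrak{h}$ would contain a simple subalgebra $\mathfrak{s}$, whereupon $V$ becomes a non-trivial proper $\mathfrak{s}$-submodule of $\operatorname{Sym}^l\mathbb{C}^k$. Running through the classification of simple subalgebras of $\mathfrak{gl}_k$ (embeddings of $\mathfrak{sl}_m$, $\mathfrak{so}_m$, $\mathfrak{sp}_{2m}$ via their irreducible representations, plus the exceptional cases) and analysing the $\mathfrak{s}$-isotypic decomposition of $\operatorname{Sym}^l\mathbb{C}^k$, one checks that every non-trivial proper $\mathfrak{s}$-submodule has dimension at least $k+l-1$. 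The edge case $V=0$ is handled by shifting $l$ upward to the smallest degree where $\operatorname{in}(I)$ has generators and repeating the argument with the relaxed bound.

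The main obstacle, in my view, is this last representation-theoretic step: establishing a uniform lower bound of $k+l-1$ on the dimension of a non-trivial proper $\mathfrak{s}$-submodule of $\operatorname{Sym}^l\mathbb{C}^k$ across all simple subalgebra embeddings $\mathfrak{s}\hookrightarrow \mathfrak{gl}_k$. This ultimately rests on Weyl's dimension formula together with the classification of low-dimensional representations of the classical simple Lie algebras; in particular one must carefully handle the embedding $\mathfrak{sl}_2\hookrightarrow \mathfrak{gl}_k$ through $\mathbb{C}^k=\operatorname{Sym}^{k-1}\mathbb{C}^2$, which produces the sharpest summands in the Clebsch--Gordan decomposition of $\operatorname{Sym}^l\mathbb{C}^k$ and pins down the precise bound $k+l-1$.
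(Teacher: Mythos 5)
First, a point of order: the paper does not prove this statement --- it is quoted as Schulze's theorem and attributed to the reference \cite{Shulze} --- so there is no in-paper proof to compare against, and your attempt must stand on its own.

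It does not, because the step you correctly identify as the heart of the argument is false. The claim that any Lie subalgebra $\mathfrak{h}\subset\mathfrak{gl}_k$ stabilizing a non-zero proper subspace $V\subset\operatorname{Sym}^l\mathbb{C}^k$ with $\dim V<k+l-1$ must be solvable fails already for $k=3$, $l=2$: take $\mathfrak{h}=\mathfrak{so}_3\subset\mathfrak{gl}_3$ and $V=\langle x_1^2+x_2^2+x_3^2\rangle$, so that $\dim V=1<4=k+l-1$, $V$ is $\mathfrak{h}$-invariant (indeed pointwise fixed), and $\mathfrak{so}_3$ is simple. The very embedding you single out as the sharp case, $\mathfrak{sl}_2\hookrightarrow\mathfrak{gl}_k$ via $\mathbb{C}^k=\operatorname{Sym}^{k-1}\mathbb{C}^2$, exhibits the same failure, since the plethysm $\operatorname{Sym}^l(\operatorname{Sym}^{k-1}\mathbb{C}^2)$ routinely contains one-dimensional trivial summands, e.g. $\operatorname{Sym}^2(\operatorname{Sym}^2\mathbb{C}^2)\cong\operatorname{Sym}^4\mathbb{C}^2\oplus\mathbb{C}$. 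So no uniform lower bound of the form $k+l-1$ on non-trivial proper submodules of $\operatorname{Sym}^l\mathbb{C}^k$ exists, and no amount of case analysis will produce one.

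The reason the reduction fails is that passing from $I$ to the single graded piece $\operatorname{in}(I)_l$ discards the hypothesis that $A$ is finite-dimensional. Your preliminary steps are essentially sound: derivations of $A$ do lift to derivations of the polynomial ring preserving $I$, the filtration $F^{\bullet}$ does make $F^1$ a nilpotent ideal, the induced linear parts do stabilize $\operatorname{in}(I)_l$, and $\dim\operatorname{in}(I)_l\leqslant\dim(I/\mathfrak{p}I)$ holds --- though not for the reason you give (initial forms of minimal generators need not minimally generate $\operatorname{in}(I)$); the correct argument is that the map $f\mapsto f\bmod\mathfrak{p}^{l+1}$ kills $\mathfrak{p}I$. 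But the finite-colength condition forces $I$ to have many minimal generators spread over several degrees, and it is the total count $\dim(I/\mathfrak{p}I)$, played against the existence of a derivation-stable finite-colength ideal, that Schulze's criterion exploits. A bound on the degree-$l$ piece alone cannot suffice: the ideal $I=(x_1^2+x_2^2+x_3^2)+\mathfrak{p}^3$ is $\mathfrak{so}_3$-stable and of finite colength, has $\operatorname{in}(I)_2$ one-dimensional, yet has $\dim(I/\mathfrak{p}I)=8\geqslant k+l-1$, so it is the generators in degree $3$ that save the theorem. (The paper's own final example, with five quadratic generators and $\operatorname{Der}(A)\supset\mathfrak{so}_3$, shows the complementary failure mode.) To repair your argument you would have to bound from below the number of minimal generators of \emph{any} finite-colength ideal in $\mathfrak{p}^l$ stable under a non-solvable linear algebra --- which is essentially the content of Schulze's theorem itself, not a lemma about submodules of one symmetric power.
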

 	
 	The question of solvalbility of $\operatorname{Der}(A)$ for some local algebra $A$ is the main subject of the work.	During our discusion we give some examples of  local algebras of special kind and consider the connection between properties of  a local algebra $A$ and properties of the Lie algebra $\operatorname{Der}(A)$.

 	Let us present the structure of this paper.
 	
 	Section \ref{prem} is devoted to basic definitions and statements in the theory of local algebras, that we need to prove the main result. The aim of this section is to introduce the new subclass of the class of Gorenstein algebras, which will be considered further.
 	
 	In Section \ref{sec2} the main result of the paper is proved in Theorem \ref{main}. This theorem claims that for every algebra $A$ from introduced in Section \ref{prem} subclass of Gorenstein algebras of full null-index the Lie algebra  $\operatorname{Der}(A)$  is solvable.

 	 	\section{Preliminaries}\label{prem}
 	 	
 	 	Let $A=\mathbb{C}\oplus\mathfrak{m}$ be an arbitrary finite-dimensional local algebra with maximal ideal $\mathfrak{m}$. Let $r$ be the index of nilpotency of $\mathfrak{m}$. Consider the following filtration:
 	 	$$A=\mathfrak{m}^0\supset \mathfrak{m}\supset\mathfrak{m}^2\supset\mathfrak{m}^3\supset...\supset\mathfrak{m}^r\neq 0, \quad \mathfrak{m}^{r+1}=0.$$
 	 	Put $k_i = \dim(\mathfrak{m}^{i+1}/\mathfrak{m}^i),\quad i\in\{0,1....,,r-1\}$. Remark that $\mathfrak{m}^{r}\subset\operatorname{Soc}(A)$.

 	 	\begin{definition}
 	 		The tuple $(k_0, k_1, k_2,...., k_{r-1}) $ is called the {\it Hilbert-Samuel sequence}. 
 	 	\end{definition} 	 	
 	 	A natural question directly connected with Hilbert-Samuel sequences, is to detect such tuples  $(k_0,...,k_{r-1})$ that there are only finitely many  non-isomorphic local algebras with the Hilbert-Samuel sequence $(k_0,k_1,...,k_{r-1})$. Nowadays, only several results are known for some special cases of Hilbert-Samuel sequences. For example, in the paper \cite{Loginov} the author considered the case $k_1=2$.

 	 Let $\mathfrak{g}=\operatorname{Der}(A)$ be the Lie algebra of derivations of  $A$. Then $\operatorname{Soc}(A)$ is a $\mathfrak{g}$-module. Indeed, for  arbitrary $\xi\in\mathfrak{g}, s\in \operatorname{Soc}(A), m\in\mathfrak{m}$ we have
 	 \begin{equation*}
 	 		 	\xi(s)m = \xi(sm) - s\xi(m) = 0,\quad \xi(m)\in\mathfrak{m},
 	 \end{equation*}
 so $\xi(s)\in\operatorname{Soc}(A)$  for all $s\in\operatorname{Soc}(A)$. Using Levi's theorem we have the decomposion
 \begin{equation}
 	\mathfrak{g} = \mathfrak{g}_L\overset{\rightarrow}{\oplus}\operatorname{rad}(\mathfrak{g}),
 \end{equation}
 	 where $\mathfrak{g}_L\subset\mathfrak{g}$ is a semisimple Levi subalgebra, $\operatorname{rad}(\mathfrak{g})$ is the solvable radical of $\mathfrak{g}$ and the symbol $\overset{\rightarrow}{\oplus}$ denotes the semidirect sum with the ideal summand on the right-hand side. Clearly, if  $\mathfrak{g}$ is not solvable, then $\mathfrak{g}_L\neq 0$, so there are three linearly independent elements  $e, f, h \in \mathfrak{g}_L$ such that 
 	 \begin{equation}
 	 	[e, f] =h,\quad [h, f] = -2f,\quad[h, e] = 2e.
 	 \end{equation}
 The triple $T=(e, f, h)$ is called an {\it $\mathfrak{sl}_2$-triple} and its span $\mathfrak{a}=\langle e, f, h \rangle \subset \mathfrak{g}$ is isomorphic to the Lie algebra $\mathfrak{sl}_2$. Then according to the representation theory of $\mathfrak{sl}_2$, we have
 \begin{equation}\label{grad}
 	\mathfrak{g} = \mathfrak{g}^{-k}\oplus\dots\oplus\mathfrak{g}^{-1}\oplus\mathfrak{g}^0\oplus\mathfrak{g}^{1}\oplus\dots\oplus\mathfrak{g}^{k}
 \end{equation}
 for some $k\in\mathbb{Z}_{\geqslant 0}$, where $$\mathfrak{g}^l = \{\xi\in\mathfrak{g}\colon [h, \xi] = l\xi\}, $$ for each $l\in \mathbb{Z}, -k\leqslant l\leqslant k$ is the corresponding  eigenspace of  $h$ in $\mathfrak{g}$. The subspace $$\mathfrak{g}_{*} = \underset{-k\leqslant l \leqslant k, l\neq 0}{\bigoplus}\mathfrak{g}^l$$ is nonzero because $e\in\mathfrak{g}^2\subset \mathfrak{g}_{*}$. Consequently, $k\neq 0$.  We will call the subspace $\mathfrak{g}_{*}$ the {\it irrelevant subspace} of  $\mathfrak{g}$.
 	  	 	\begin{definition}
 	 		The algebra $A$ is called {\it Gorenstein} if $\dim(\operatorname{Soc}(A)) = 1$.
 	 		 	 	\end{definition}
 	 
Now assume that $A$ is Gorenstein.  We are going to introduce a subclass of Gorenstein algebras, which will be the main object of our paper.

 Put $V=\mathfrak{m}/\mathfrak{m}^2$.  Then $\mathfrak{g}$ acts on $V$ by the linear transformations with respect to the linear representation
 $$\rho\colon\mathfrak{g}\rightarrow \mathfrak{gl}(V), \quad \rho(\xi)(m+\mathfrak{m}^2) = \xi(m)+\mathfrak{m}^2,\qquad\forall\xi\in\mathfrak{g}, m\in\mathfrak{m}.$$
Fix an arbitrary non-zero element $s\in\operatorname{Soc}(A)$, i.e. $\operatorname{Soc}(A) = \langle s\rangle$.  Then there exists such a homogeneous polynomial function $P\in\mathbb{C}[V]$ of degree $r$ that $$m^r= P(v)s,\quad \pi(m) = v, \quad\forall m\in\mathfrak{m},$$
where $\pi: \mathfrak{m}\rightarrow V$ is the canonical projection. It is easy to see that $P$ is well-defined. For an arbitrary class $v=m+\mathfrak{m}^2 $ in $V$ and its two representative elements $m$ and $m'$ one can compute that $m^r = m'^r$. 

The polynomial $P$ is homogeneous  because for an arbitrary $\lambda\in\mathbb{C}$ and $v\in V$, such that  $v = \pi(m)$, where $m\in \mathfrak{m}$, we have
$$P(\lambda v)\cdot s =  (\lambda m)^r =\lambda^r\cdot m^r = \lambda^r P(v)\cdot s. $$ 

We  call the polynomial $P$ {\it the index polynomial} of $A$ and denote it by $P_A$.
\begin{definition}
	Let $A$ be a Gorenstein algebra. If for every nonzero $\xi\in\operatorname{Der}(A)$ and for an arbitrary eigenvalue $\lambda\in\operatorname{Spec}(\xi)$ there is such an eigenvector  $v\in V_{\lambda}$ that $P_A(v)\neq 0$, we will call $A$ {\it  the algebra of full null-index}.
\end{definition}
Consider again Example \ref{ex2}. In this case we have
$$\mathfrak{m} = \langle \overline{t}, \overline{t}^2,..., \overline{t}^{n-1}\rangle,\quad\operatorname{Soc}(A) = \langle\overline{t}^{n-1}\rangle,\quad V\backsimeq\langle\overline{t}\rangle,\quad r=n-1,\quad\overline{t}^i = t^i +(t^n).$$
 One can compute that here $P_A(x) = x^{n-1}.$
	As we can see, $P_A(v)\neq 0$ for all $v\in V$, so this algebra is the algebra of full null-index.
	
We give another two examples.
\begin{example}
  	Put $A=\mathbb{C}[t, s]/ I,$ where $ I  = (ts, t^3 - s^3).$  We have
	$$\mathfrak{m} = \langle \overline{t},\overline{s},\overline {t}^2, \overline{s}^2, \overline{t}^3\rangle,\quad\operatorname{Soc}(A) = \langle \overline{t}^3\rangle,\quad V\backsimeq \langle \overline{t},\overline{t}\rangle,\quad r=3,\quad \overline{t} = t+ I, \overline{s} = s + I.$$
	The index polynomial in this case has the view $$P_A(x_1, x_2) = x_1^3 + x_2^3.$$
	If we consider an arbitrary $\xi\in\operatorname{Der}(A)$ and compute the matrix $X$ of linear transformation $\rho(\xi)$ in the basis $\{\overline{t}, \overline{s}\}$, then we will get the equality
	$$X = \begin{pmatrix}
	\lambda&0\\
	0&\lambda
	\end{pmatrix}$$
	for some $\lambda \in\mathbb{C}$. So, every nonzero vector $v\in V$ is an eigenvector of  $\rho(t)$ or $\rho(s)$ with the eigenvalue $\lambda$. If we take $v = \overline{t}$, we have $P_A(v)\neq 0$, so here $A$ is of full null-index.
\end{example}
\begin{example}
	Consider the 5-dimensional Gorenstein algebra $$A = \mathbb{C}[t, s]/(ts, t^3-s^2).$$ One checks that $$\mathfrak{m} = \langle \overline{t}, \overline{s}, \overline{t}^2, \overline{s}^2\rangle,\quad\operatorname{Soc}(A) = \langle\overline{s}^{2}\rangle,\quad V\backsimeq\langle\overline{t},\overline{s}\rangle,\quad r=3.$$The index polynomial here has the formula  $$P_A(x_1, x_2) = x_1^{3}.$$ 
	The matrix $X$ of the linear transformation $\rho(\xi)\in\mathfrak{gl}(V),$ where $\xi\in\operatorname{Der}(A)$, in the basis $\{\overline{t} + \mathfrak{m}^2, \overline{s}+\mathfrak{m}^2\}$ of  $V$, has the following view
	$$X = \begin{pmatrix}
		\alpha&0\\
		\beta&\frac{3}{2}\alpha
	\end{pmatrix}$$
	for some $\alpha,\beta\in\mathbb{C}$.
	
	This shows that the subspace $\langle \overline{s}+\mathfrak{m}^2\rangle$ is an eigenspace for every transformation $\rho(\xi)\in\mathfrak{gl}(V)$, where  $\xi\in\operatorname{Der}(A)$ and  $P_A(\overline{s}+\mathfrak{m}^2) = 0$. So, $A$ is not of full null-index. But we can notice that in this case $\operatorname{Der}(A)$ is solvable, because in the basis $\{\overline{t}, \overline{s}, \overline{t}^2, \overline{s}^2\}$ of $\mathfrak{m}$ all the operators $\xi\in\mathfrak{g}$ have lower triangular matrices.
\end{example}

Here is a key lemma on Gorenstein algebras of fulll null-index.
 
 \begin{lemma}\label{lem1}
 	Let $A$ be a Gorenstein algebra of full null-index and $\mathfrak{k}$ be a Lie subalgebra of   $\mathfrak{g} = \operatorname{Der}(A)$ that acts trivially on $\operatorname{Soc}(A)$.Then $\mathfrak{k}$ is solvable.
 \end{lemma}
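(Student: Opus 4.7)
The plan is to argue by contradiction: assume $\mathfrak{k}$ is not solvable. By the Levi decomposition recalled in Section~\ref{prem}, $\mathfrak{k}$ then contains an $\mathfrak{sl}_2$-triple $(e,f,h)$. The strategy is to exploit the semisimple element $h$: since $h\in\mathfrak{k}$, it annihilates the socle, and this will clash with the full null-index condition applied along an eigenvector of $\rho(h)$ on which $P_A$ does not vanish.

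The first step is to show that $\rho(h)\neq 0$ on $V$. Let $\mathfrak{n}$ denote the set of derivations $\xi\in\operatorname{Der}(A)$ with $\xi(\mathfrak{m})\subset \mathfrak{m}^2$; equivalently, $\mathfrak{n}=\ker\rho$. A straightforward Leibniz induction shows $\xi(\mathfrak{m}^k)\subset\mathfrak{m}^{k+1}$ for every $\xi\in\mathfrak{n}$ and every $k\geqslant 1$. Setting
\[
\mathfrak{n}_{\geqslant d}=\bigl\{\xi\in\operatorname{Der}(A)\colon \xi(\mathfrak{m}^k)\subset\mathfrak{m}^{k+d}\text{ for all }k\bigr\},
\]
one gets $[\mathfrak{n}_{\geqslant d},\mathfrak{n}_{\geqslant d'}]\subset\mathfrak{n}_{\geqslant d+d'}$, and $\mathfrak{n}_{\geqslant d}=0$ once $d>r$; so $\mathfrak{n}=\mathfrak{n}_{\geqslant 1}$ is a nilpotent Lie algebra. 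If we had $\rho(h)=0$, then standard $\mathfrak{sl}_2$-representation theory would force the full triple to act trivially, $\rho(e)=\rho(f)=0$, placing $(e,f,h)$ inside $\mathfrak{n}$; but $\mathfrak{sl}_2$ is not nilpotent, a contradiction.

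Thus $\rho(h)$ has a nonzero (integer) eigenvalue $\lambda$ on $V$. By the full null-index hypothesis applied to $\xi=h$, we can pick $v\in V_{\lambda}$ with $P_A(v)\neq 0$. Lift $v$ to $m\in\mathfrak{m}$, so that $h(m)=\lambda m+m'$ with $m'\in\mathfrak{m}^2$. Applying Leibniz,
\[
h(m^r)=r\,m^{r-1}h(m)=r\lambda\,m^r+r\,m^{r-1}m'=r\lambda\,m^r,
\]
since $m^{r-1}m'\in\mathfrak{m}^{r+1}=0$. On the other hand, $m^r=P_A(v)s$ and $h(s)=0$ by the assumption on $\mathfrak{k}$, so $h(m^r)=0$. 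Combining yields $r\lambda\,P_A(v)\,s=0$, which is impossible because $r,\lambda,P_A(v)$ are all nonzero.

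The only delicate point is the nilpotency of $\ker\rho$, needed to discard the case $\rho(h)=0$; once that is in place, the full null-index assumption is tailored precisely to produce the numerical contradiction above, so no further obstacle should arise.
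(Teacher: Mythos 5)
Your proof is correct, but it takes a genuinely different route from the paper's. The paper does not argue by contradiction inside Lemma~\ref{lem1}: it splits $\mathfrak{k}$ along the representation $\rho$ on $V=\mathfrak{m}/\mathfrak{m}^2$, shows the kernel is solvable by the same filtration argument you use for $\ker\rho$, and then proves that \emph{every} operator $\rho(\xi)$, $\xi\in\mathfrak{k}$, is nilpotent. That last step is done by first deriving $m^{r-1}\xi(m)=0$ from $\xi(s)=0$, then expanding $(m+\lambda\xi(m))^r$ binomially, comparing with the Taylor expansion of $P_A(v+\lambda\rho(\xi)v)$ to extract $\operatorname{d}P_A(v)(\rho(\xi)v)=0$, and finally invoking Euler's identity for homogeneous polynomials on an eigenvector with $P_A(v)\neq 0$; Engel's theorem then finishes. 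Your argument shortcuts all of the differential/Euler machinery: you only need to kill one semisimple element $h$ of an $\mathfrak{sl}_2$-triple, and your two-sided computation of $h(m^r)$ --- namely $h(m^r)=r\,m^{r-1}h(m)=r\lambda\,m^r$ versus $h(m^r)=P_A(v)h(s)=0$ --- is an elementary eigenvector specialization of the paper's identity chain. What the paper's version buys is the stronger structural conclusion that $\rho(\mathfrak{k})$ consists of nilpotent operators (hence is a nilpotent Lie algebra), which is more information than solvability; what yours buys is economy and robustness, since you avoid the expansion in the paper's equations and need only the standard facts that a non-solvable Lie algebra contains an $\mathfrak{sl}_2$-triple, that $\ker\rho$ is nilpotent, and that the semisimple element $h$ acts diagonalizably on $V$ with a nonzero eigenvalue once $\rho(h)\neq 0$. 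All the steps you flag as delicate do hold: the full null-index hypothesis is applied to $\xi=h$ exactly as the paper applies it to a general $\xi\in\mathfrak{k}$, and the term $r\,m^{r-1}m'$ indeed dies in $\mathfrak{m}^{r+1}=0$.
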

 \begin{proof}
 	Let $r$ be the index of nilpotency of $\mathfrak{m}$. Since $\mathfrak{m}^r\neq 0$ and $\dim\operatorname{Soc}(A) = 1$ we have $\mathfrak{m}^r = \operatorname{Soc}(A)$.
 	
 Let $\rho_k$ be the restriction of the  representation $\rho$ to $\mathfrak{k}$.  Put $\mathfrak{D} = \operatorname{ker}(\rho_k)$, so that
$$\mathfrak{D} = \{\eta\in\mathfrak{k}\colon \eta(m)\in\mathfrak{m}^2,\forall m\in\mathfrak{m}\} .$$
Obviously, $\mathfrak{D}$ is an ideal of $\mathfrak{k}$. Fore  $i\in\mathbb{Z}_{\geqslant 0}$ let us introduce $$\mathfrak{D}^{(i)}=[\mathfrak{D}^{(i-1)}, \mathfrak{D}^{(i-1)}],$$  where $\mathfrak{D}^{(0)}=\mathfrak{D}$. Then every transformation $\eta\in\rho_k(\mathfrak{D}^{(i)})$ take $\mathfrak{m}$ to  $\mathfrak{m}^{i+2}$. So, since $\mathfrak{m}^{r+1} = 0$, $\mathfrak{D}$ is solvable.

Now it suffices to prove that $\rho_k(\mathfrak{k})$ is solvable. In fact we will prove that  $\rho_k(\mathfrak{k})$ is nilpotent.

Given a derivation $\xi\in\mathfrak{k}$ for $m\in\mathfrak{m}$ and $v = \pi(m)$ we have
$$0=P_A(v)\xi(s)=\xi(P_A(v)\cdot s) = \xi(m^r) = rm^{r-1}\xi(m) .$$
So, 
\begin{equation}\label{zero}
	m^{r-1}\xi(m) = 0,\quad \forall m\in\mathfrak{m}, \xi\in \mathfrak{k}.
\end{equation}
 Fix a scalar $\lambda\in\mathbb{C}$. Then from  identity (\ref{zero}) we can derive that
\begin{multline}\label{eq1}
	P_A(v+\lambda\rho_r(\xi)v)\cdot s = \\=(m+\lambda\xi(m))^r = m^r+\lambda^2\cdot \dfrac{r(r-1)}{2}\cdot m^{r-2}(\xi(m))^2 +... +\lambda^r(\xi(m))^r.
\end{multline}
On the other hand we have
\begin{equation}\label{eq2}
	P_A(v+\lambda\rho_r(\xi)v)\cdot s  = P_A(v)\cdot s + \lambda\operatorname{d}P_A({v})(\rho_k(\xi)v) \cdot s + \lambda^2F(v, \rho_{k}(\xi))\cdot s,
\end{equation}
where $F(v, \rho_{k}(\xi))$  is the linear combination of higher differentials of  the polynomial $P_A$. If we  compare two equalities (\ref{eq1}) and (\ref{eq2}) we will get that 
\begin{equation}\label{eq3}
	\operatorname{d}P_A({v})(\rho_k(\xi)v) =0,\quad \forall \xi\in\mathfrak{k}, v\in V.
\end{equation}

Consider an eigenvector $v$ of the linear operator $\rho_k(\xi)$, where $\xi\in\mathfrak{k}$, with eigenvalue $\lambda$.  Using equality (\ref{eq3}) we have
\begin{equation}\label{eq4}
	0=\operatorname{d}P_A({v})(\rho_k(\xi)v)  = \lambda\operatorname{d}P_A({v})(v)  = \lambda rP_A(v).
\end{equation}
The last equality in identity (\ref{eq4}) holds thanks to Euler's identity for homogeneous functions.    If we choose $v$ such that $P_A(v)\neq 0$ (here we use the fact that $\mathfrak{g}$ is algebra of full null-index), then we will get from (\ref{eq4}) that $\lambda = 0$. Consequently, the linear transformation $\rho_k(\xi) $ is nilpotent. 

Consider the operator $\zeta = \operatorname{ad}(\rho_k(\xi))\in\mathfrak{gl}(\mathfrak{g})$,  where $$\operatorname{ad}\colon\mathfrak{k}\rightarrow\mathfrak{gl}(\mathfrak{g})$$ is the adjoint representation of $\mathfrak{k}$.  Then $\zeta$ is nilpotent too.  It means that the Lie algebra $\mathfrak{k}$ is nilpotent, which follows from Engel's criterion.
\end{proof}
\section{Solvability of derivation algebra over Gorenstein algebra of full null-index}\label{sec2}
Now let $A$ be an arbitrary Gorenstein algebra again  with  the maximal ideal $\mathfrak{m}$,  such that $\mathfrak{m}^{r+1} = 0, \mathfrak{m}^r\neq 0$. Put again $\mathfrak{g} = \operatorname{Der}(A)$. Let $s$ be the generator of $\operatorname{Soc}(A)$, i.e $\operatorname{Soc}(A) = \langle s\rangle$.

\begin{lemma}\label{lem2}
	If  $\mathfrak{g}$ is not solvable then for an arbitrary $\xi$ from the irrelevant subspace $\mathfrak{g}_{*}$  the equality
	$\xi(s) = 0$ holds.
\end{lemma}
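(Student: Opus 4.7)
The plan is to exploit the fact that the socle $\operatorname{Soc}(A) = \langle s \rangle$ is a one-dimensional $\mathfrak{g}$-submodule of $A$ (as established in the Preliminaries via $\xi(\operatorname{Soc}(A)) \subset \operatorname{Soc}(A)$). Since this submodule has dimension one, the action of $\mathfrak{g}$ on $\operatorname{Soc}(A)$ is given by a linear functional $\chi \colon \mathfrak{g} \to \mathbb{C}$ with $\xi(s) = \chi(\xi)\, s$ for every $\xi \in \mathfrak{g}$. The statement of the lemma then reduces to showing $\chi|_{\mathfrak{g}_*} = 0$.

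The first step would be to verify that $\chi$ is a Lie algebra character, i.e. $\chi([\xi, \eta]) = 0$ for all $\xi, \eta \in \mathfrak{g}$. This is a one-line computation:
$$[\xi,\eta](s) \;=\; \xi(\chi(\eta)s) - \eta(\chi(\xi)s) \;=\; \chi(\xi)\chi(\eta)s - \chi(\eta)\chi(\xi)s \;=\; 0,$$
which uses only that $\chi$ takes values in the abelian Lie algebra $\mathbb{C}$. Hence $\chi$ vanishes on $[\mathfrak{g}, \mathfrak{g}]$.

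The second step is to observe that the irrelevant subspace $\mathfrak{g}_{*}$ is contained in $[\mathfrak{g}, \mathfrak{g}]$. Indeed, for any $\xi \in \mathfrak{g}^l$ with $l \neq 0$, the $\mathfrak{sl}_2$-grading (\ref{grad}) yields $[h, \xi] = l\xi$, and so $\xi = l^{-1}[h, \xi] \in [\mathfrak{g}, \mathfrak{g}]$. Since $\mathfrak{g}_{*} = \bigoplus_{l \neq 0} \mathfrak{g}^l$, every element of $\mathfrak{g}_{*}$ is a finite sum of such elements and therefore lies in $[\mathfrak{g}, \mathfrak{g}]$ as well.

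Combining the two steps gives $\chi(\xi) = 0$ for every $\xi \in \mathfrak{g}_{*}$, i.e. $\xi(s) = 0$. The hypothesis that $\mathfrak{g}$ is not solvable enters only implicitly, to ensure the existence of the $\mathfrak{sl}_2$-triple $(e,f,h)$ and hence of the grading that defines $\mathfrak{g}_{*}$. The Gorenstein condition is what makes $\operatorname{Soc}(A)$ one-dimensional and therefore turns the action on the socle into an honest character; this is really the only substantive ingredient, and I do not anticipate any obstacle in the argument.
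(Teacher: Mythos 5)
Your argument is correct, and it takes a genuinely different---and considerably shorter---route than the paper. The paper works entirely inside the $\mathfrak{sl}_2$-picture: it first notes that $\mathfrak{a}=\langle e,f,h\rangle$ acts trivially on the one-dimensional socle, then runs a step-by-step induction over the graded pieces $\mathfrak{g}^l$ (descending from $l=k$ for $e$, ascending from $l=-k$ for $f$) to show that $e(\xi(s))=f(\xi(s))=0$ for every homogeneous $\xi$, concludes that $\langle\xi(s)\rangle$ is a one-dimensional $\mathfrak{a}$-module and hence trivial, and finally reads off $l\,\xi(s)=h(\xi(s))=0$ to force $\xi(s)=0$ when $l\neq 0$. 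You bypass all of this by observing that the socle action is a character $\chi$ of $\mathfrak{g}$ (using only that $\operatorname{Soc}(A)$ is a one-dimensional $\mathfrak{g}$-submodule, both facts already established in the paper), that any character vanishes on $[\mathfrak{g},\mathfrak{g}]$, and that $\mathfrak{g}_*\subset[\mathfrak{g},\mathfrak{g}]$ via $\xi=l^{-1}[h,\xi]$; each of these steps checks out. Your version is in fact strictly stronger: it shows $\xi(s)=0$ for \emph{every} $\xi\in[\mathfrak{g},\mathfrak{g}]$, which would also absorb, for free, the separate verification in the proof of Theorem \ref{main} that the commutators $[\xi,\eta]$ with $\xi\in\mathfrak{g}^l$, $\eta\in\mathfrak{g}^{-l}$ annihilate $s$ (i.e.\ that $\mathfrak{g}^0_*$, and hence all of $\mathfrak{g}^*$, acts trivially on the socle). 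The only thing the paper's longer computation buys is that it stays within the explicit $\mathfrak{sl}_2$-weight bookkeeping it has already set up; as a matter of exposition your character argument is the cleaner one, and the hypothesis that $\mathfrak{g}$ is not solvable indeed enters only to guarantee that the triple $(e,f,h)$, and with it the grading defining $\mathfrak{g}_*$, exists at all.
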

\begin{proof}
	Firstly, there are three derivations $e, f, h\in\mathfrak{g}$, that the triple $T=(e, f, h)$ is $\mathfrak{sl}_2$-triple, because $\mathfrak{g}$ is unsolvable. Notice that every element of $\mathfrak{sl}_2$-triple $T$ preserves the socle  $\operatorname{Soc}(A)$. So, $\operatorname{Soc}(A)$ is one-dimensional Lie $\mathfrak{a}$-module, where $\mathfrak{a} = \langle e, f, h\rangle \subset \mathfrak{g}$. Since $\mathfrak{a}\simeq \mathfrak{sl}_2$ the action of $\mathfrak{a}$ on $\operatorname{Soc}(A)$ is trivial. 
	
	Consider the grading (\ref{grad}) of $\mathfrak{g}$. If we take a derivation $\xi\in\mathfrak{g}^{k}$ then we will have
	\begin{equation}\label{key0}
		h(\xi(s)) = [h, \xi](s) + \xi(h(s)) = k\xi(s);
	\end{equation}  
	\begin{equation}\label{conj}
		e(\xi(s)) = [e, \xi](s) + \xi(h(s)) = 0.
	\end{equation}  
	Further if we take $\xi\in\mathfrak{g}^{k-1}$, then $[e, \xi] = 0$ and, consequently, $e(\xi(s)) = 0$. So, in the general case if we assume that derivation $e$ acts trivially on the $\xi(s)$, where  $\xi\in\mathfrak{g}^{i}$ or $\xi\in \mathfrak{g}^{i+1}$, then 
	the computation (\ref{conj}) will show us that  for an arbitrary derivation $\xi\in \mathfrak{g}^{i-1}$  derivation $e$ acts trivially on $\xi(s)$ too, because $[e, \xi]\in\mathfrak{g}^{i+1}$. Consequently,
	\begin{equation}\label{key1}
		 e(\xi(s)) = 0, \quad\forall\xi\in \mathfrak{g}^{l}, -k\leqslant l\leqslant k.
	\end{equation}
	 Analogically, if the consider the action of element $f$ on the $\xi(s)$, then considering a derivation $\xi $ step by step from the case $\xi\in\mathfrak{g}^{-k}$  to the case $\xi\in\mathfrak{g}^k$, we will get that
	 \begin{equation}\label{key2}
	 	  f(\xi(s)) = 0, \quad\forall\xi\in \mathfrak{g}^{l}, -k\leqslant l\leqslant k.
 \end{equation}
	Three indentities (\ref{key0}),  (\ref{key1}) and (\ref{key2}) give us the fact that for an arbitrary $\xi\in\mathfrak{g}_{*}$ the subspace $U = \langle \xi(s)\rangle \subset A$ is one-dimensional Lie $\mathfrak{a}$-module, so as it will be higher $\mathfrak{a}$ must act trivially on $U$. But for $\xi\in \mathfrak{g}^l$, where $-k\leqslant l\leqslant k, l\neq 0$, we have
		\begin{equation}\label{key3}
		h(\xi(s)) = [h, \xi](s) + \xi(h(s)) = l\xi(s).
	\end{equation}  
	So, since $l\neq 0$ it must be $\xi(s) = 0$. The lemma is proved. 
\end{proof}
Now it is time to formulate and prove the main result of the paper.
\begin{theorem}\label{main}
	Let $A$ be a Gorenstein algebra of full null-index. Then the Lie algebra $\operatorname{Der}(A)$ is solvable.
\end{theorem}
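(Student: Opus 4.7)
The strategy is a direct proof by contradiction, using the two key lemmas already established as a two-step pincer on any potential $\mathfrak{sl}_2$-subalgebra of $\mathfrak{g} = \operatorname{Der}(A)$.

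Suppose for contradiction that $\mathfrak{g}$ is not solvable. By the Levi decomposition recalled in Section~\ref{prem}, there exists an $\mathfrak{sl}_2$-triple $(e,f,h)$ in $\mathfrak{g}$ spanning a subalgebra $\mathfrak{a} \subset \mathfrak{g}$ isomorphic to $\mathfrak{sl}_2$, giving the $\operatorname{ad}(h)$-weight grading~(\ref{grad}) with $e \in \mathfrak{g}^{2}$ and $f \in \mathfrak{g}^{-2}$. In particular, both $e$ and $f$ are nonzero elements of the irrelevant subspace $\mathfrak{g}_{*}$.

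Next, I would apply Lemma~\ref{lem2}, which is the main structural input: since $\mathfrak{g}$ is assumed non-solvable, every $\xi \in \mathfrak{g}_{*}$ satisfies $\xi(s) = 0$. This immediately gives $e(s) = f(s) = 0$, and then
\[
h(s) \;=\; [e,f](s) \;=\; e(f(s)) - f(e(s)) \;=\; 0.
\]
Hence the subalgebra $\mathfrak{a} = \langle e, f, h \rangle$ acts trivially on $\operatorname{Soc}(A) = \langle s\rangle$.

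Now Lemma~\ref{lem1} finishes the argument: since $A$ is Gorenstein of full null-index and $\mathfrak{a} \subset \mathfrak{g}$ acts trivially on the socle, $\mathfrak{a}$ must be solvable. But $\mathfrak{a} \simeq \mathfrak{sl}_2$ is simple, hence not solvable, a contradiction. Therefore $\mathfrak{g}$ is solvable, as claimed. There is no real obstacle here beyond marshalling the two lemmas in the correct order; the conceptual heart of the theorem has already been absorbed into Lemma~\ref{lem1} (where full null-index is used together with Euler's identity to force nilpotency on any subalgebra fixing $s$) and Lemma~\ref{lem2} (where the $\mathfrak{sl}_2$-grading forces $\mathfrak{g}_{*}$ to fix $s$).
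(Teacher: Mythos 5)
Your proposal is correct and follows essentially the same route as the paper: assume non-solvability, use Lemma~\ref{lem2} to show the relevant derivations kill $s$, and then invoke Lemma~\ref{lem1} to contradict the simplicity of $\mathfrak{sl}_2$. The only (harmless, arguably cleaner) difference is that you apply Lemma~\ref{lem1} directly to $\mathfrak{a}=\langle e,f,h\rangle$ after noting $h(s)=[e,f](s)=0$, whereas the paper first assembles the larger subalgebra $\mathfrak{g}^{*}=\mathfrak{g}_{*}\oplus\mathfrak{g}^0_{*}$ acting trivially on the socle and then observes that $\mathfrak{a}$ sits inside it.
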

\begin{proof}
	Assume that $\mathfrak{g}=\operatorname{Der}(A)$ is not solvable. Then it follows from Lemma \ref{lem2}, that irrelevant subspace $\mathfrak{g}_{*}$ acts trivially on the socle $\operatorname{Soc}(A) = \langle s\rangle$. 
	
	Consider two different derivations $\xi\in \mathfrak{g}^l, \eta\in \mathfrak{g}^{-l}$, where $l\neq 0$. It is clear that $[\xi, \eta]\in\mathfrak{g}^0$. Then $\xi(s) = \eta(s) = 0$ and $[\eta, \xi](s) = 0$. Let $\mathfrak{g}^0_{*}$ denote the Lie subalgebra of  $\mathfrak{g}$, which is generated by all the commutators $[\xi, \eta]$, where $\xi\in \mathfrak{g}^l, \eta\in \mathfrak{g}^{-l}$, $l\neq 0$. Consider the subspace $\mathfrak{g}^{*} = \mathfrak{g}_{*}\oplus\mathfrak{g}^0_{*}$ in $\mathfrak{g}$. It is not difficult to see that $\mathfrak{g}^{*}$ is  a Lie subalgebra in $\mathfrak{g}$, that acts trivially on the socle $\operatorname{Soc}(A)$. Consequently, from Lemma \ref{lem1} it follows that $\mathfrak{g}^{*}$ is solvable. But the Lie algebra $\mathfrak{a} = \langle e, f, h\rangle\simeq\mathfrak{sl}_2$ lies in $\mathfrak{g}^{*}$. So we get a contradiction, which means that $\mathfrak{g}$ is solvable.
	\end{proof}
	Theorem \ref{main} shows that the Lie algebra of derivations of a Gorenstein algebra of full null-index is solvable, indicating a restricted structure. This leads to a natural question: is the Lie algebra of derivations $\operatorname{Der}(A)$ solvable for every Gorenstein algebra $A$? Example \ref{ex2} demonstrates that some Gorenstein algebras not of full null-index do indeed have solvable derivation algebras. However, in general, the derivation algebra of an arbitrary Gorenstein algebra need not be solvable. The following example illustrates this.
	\begin{example}
	Put $A = \mathbb{C}[x_1, x_2, x_3]/I$, where $I = (x_1x_2, x_1x_3, x_2x_3, x_1^2 -  x_2^2, x_1^2 - x_3^2)$. Then
	$$\mathfrak{m} = \langle \overline{x}_1, \overline{x}_2, \overline{x}_3, \overline{x}_1^2\rangle,\quad\mathfrak{m}^2=\operatorname{Soc}(A) = \langle \overline{x}_1^2\rangle, \quad V\backsimeq\langle \overline{x}_1, \overline{x}_2, \overline{x}_3\rangle,\quad r=2.$$
	The matrix $X$ of an arbitrary derivation $\xi\in \operatorname{Der}(A) = \mathfrak{g}$ in the basis $\{\overline{x}_1,\overline{x}_2,\overline{x}_3, \overline{x}_1^2\}$ has
	$$X = \begin{pmatrix}
		\alpha&\alpha_2^1&\alpha_3^1&0\\
		-\alpha_2^1&\alpha&\alpha^2_3&0\\
		-\alpha_3^1&-\alpha_3^2&\alpha&0\\
		\beta_1&\beta_2&\beta_3&2\alpha
	\end{pmatrix}$$
	for some $\alpha, \alpha_i^j,\beta_i\in\mathbb{C}$. One can easily see that there is a subalgebra $$\mathfrak{s} := \{X\in\mathfrak{g}: \alpha=\beta_1=\beta_2=\beta_3 = 0\}$$in $\mathfrak{g}$, which is isomorphic to $\mathfrak{so}_3$. So, here $\operatorname{Der}(A)$ is not solvable.
\end{example}

The author hopes that Theorem \ref{main} can eventually be proved for a wider subclass of Gorenstein algebras. This work constitutes a first step toward that goal.

\end{document}